\numberwithin{equation}{section}
\theoremstyle{plain}
\newtheorem{theorem}[subsection]{{Theorem}}
\newtheorem{corollary}[subsection]{{Corollary}}
\theoremstyle{definition}
\newtheorem{definition}[subsection]{{Definition}}
\theoremstyle{remark}
\newtheorem{remark}[subsection]{{Remark}}
\newtheoremstyle{RepTheorem} %restate theorems with same numbering
	{\topsep}{\topsep}
	{\itshape}
	{}
	{\bfseries}
	{.}
	{ }
	{\thmname{#1}\thmnote{ \bfseries #3}}
\theoremstyle{RepTheorem}
\newtheorem{reptheorem}[subsection]{Theorem}
\def\Z {{\mathbb Z}}
\def\F {{\mathbb F}}
\def\Q {{\mathbb Q}}
\def\A {{\mathbb A}}
\def\G {{\mathbb G}}
\def\P {{\mathbb P}}
\def\mcL {{\mathcal L}}
\def\mcP {{\mathcal P}}
\def \w {{\textnormal{\textbf{w}}}}
\def \u {{\textnormal{\textbf{u}}}}
\def \v {{\textnormal{\textbf{v}}}}
\newcommand{\Div}{\textnormal{Div}}
\newcommand{\Ht}{\textnormal{ht}}
\newcommand{\ord}{\textnormal{ord}}
\newcommand{\Pic}{\textnormal{Pic}}
\newcommand{\mapsfrom}{\mathrel{\reflectbox{\ensuremath{\mapsto}}}}
\newcommand{\defeq}{\stackrel{\textnormal{def}}{=}}
\begin{document}

\title{Points of Bounded Height on Weighted Projective Stacks Over Global Function Fields}

\author{Tristan Phillips}

\date{}

\maketitle

\begin{abstract}
In this note we give exact formulas (and asymptotics) for the number of rational points of bounded height on weighted projective stacks over global function fields. 
\end{abstract}

\section{Introduction}

In 1979 Schanuel proved a beautiful asymptotic formula for the number of rational points of bounded height on a projective space $\P^n$ over a number field $K$ \cite{Sch79}. In 1998 Schanuel's result was generalized to the case of weighted projective spaces by Deng \cite{Den98} (see also \cite[Theorem 3.7]{BN22}, where it is shown that Deng's proof can easily be modified to work for weighted projective stacks). Recently Darda has given a new proof of Deng's result using height zeta functions \cite{Dar21}.

\begin{theorem}[Schanuel-Deng-Darda] \label{thm:SDD}
Let $K$ be a number field of degree $d$ over $\Q$, with class number $h$, regulator $R$, discriminant $\Delta(K)$, $r_1$ real places, $r_2$ complex places, Dedekind zeta function $\zeta_K(s)$, and which contains $\omega$ roots of unity. Let $\w=(w_0,\dots,w_n)$ be an $(n+1)$-tuple of positive integers and set $|\w|\defeq w_0+w_1+\cdots+w_n$ and
$\gcd(\w,\omega)\defeq\gcd\{w_0,w_1,\dots,w_n,\omega\}$.

Then the number of $K$-rational points on the weighted projective stack $\mcP(\w)$ of height at most $B$ is
\[
\frac{h R (2^{r_1}(2\pi)^{r_2})^{n+1}|w|^{r_1+r_2-1}\gcd(\w,\omega)}{\zeta_K(|\w|)|\Delta(K)|^{(n+1)/2}\omega } B^{|\w|}+\begin{cases}
O(B\log(B)) & \text{ if } \mcP(\w)(K)=\P^1(\Q)\\
O(B^{|\w|-\min_i\{w_i\}/d}) & \text{otherwise.}
\end{cases}
\]
\end{theorem}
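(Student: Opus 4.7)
The plan is to follow Schanuel's classical two-step approach, adapted to the weighted setting. A point of $\P(\w)(K)$ is an equivalence class of $(x_0,\ldots,x_n)\in K^{n+1}\setminus\{0\}$ under the $\w$-twisted scaling $(x_0,\ldots,x_n)\sim(\lambda^{w_0}x_0,\ldots,\lambda^{w_n}x_n)$ for $\lambda\in K^*$, and the height is built from the local factors $\max_i|x_i|_v^{1/w_i}$, which is well-defined on the quotient by the product formula. The goal is thus to count $\O_K^*$-orbits of integral representatives of height at most $B$.

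First I would sort these orbits by the ideal class attached to the common weighted divisor of the coordinates (gathering a factor $h$ for the class number), and within each class apply Möbius inversion over integral ideals to reduce to counting primitive integral tuples. Since the height scales with weighted degree $|\w|$ under scaling by an integral element in the appropriate sense, the Möbius-inverted Dirichlet series is evaluated at $s=|\w|$, producing the $1/\zeta_K(|\w|)$ factor.

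The main analytic step is then a lattice-point count. Embedding $\O_K^{n+1}$ into $K_\infty^{n+1}\simeq(\R^{r_1}\times\C^{r_2})^{n+1}$ as a lattice of covolume $(2^{-r_2}|\Delta(K)|^{1/2})^{n+1}$, one counts primitive lattice points in the height region $\{H_\w\le B\}$ modulo a Dirichlet-style fundamental domain for the $\w$-twisted action of $\O_K^*$. The resulting volume breaks into several pieces: $(2^{r_1}(2\pi)^{r_2})^{n+1}$ from the archimedean phases of the $n+1$ coordinates; the regulator $R$ from the free part of $\O_K^*$; the factor $|\w|^{r_1+r_2-1}$ as the Jacobian of the change from standard logarithmic coordinates to those adapted to the weighted scaling (in which the $i$-th coordinate has log-weight $w_i$); and $\gcd(\w,\omega)/\omega$ because the torsion subgroup $\mu_\omega$ of $\O_K^*$ acts on a tuple through only its quotient $\mu_{\omega/\gcd(\w,\omega)}$, so each orbit is enlarged by the $\gcd$ factor.

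The main obstacle is the error estimate. The region $\{H_\w\le B\}$ is only piecewise smooth, with codimension-one faces where the max in the local height switches dominant index, so a Lipschitz-type boundary count yields an error of order $B^{|\w|-\min_iw_i/d}$: shrinking $B$ by a factor $\delta$ shrinks the $i$-th coordinate slab at an archimedean place by $\delta^{w_i/d}$, and the saving is dominated by the smallest weight. In the exceptional case $\P(\w)(K)=\P^1(\Q)$ (forcing $K=\Q$, $n=1$, $w_0=w_1=1$) the unit rank $r_1+r_2-1$ vanishes and the counting integral degenerates, producing a $\log B$ factor rather than a polynomial saving; I would handle this case by the direct elementary argument from Schanuel's original paper.
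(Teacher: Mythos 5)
This is a background theorem quoted from Schanuel, Deng, and Darda; the paper gives no proof of it, so there is no internal argument to compare yours against. On its own terms, your outline is the standard Schanuel--Deng geometry-of-numbers route, and your accounting of the constant is essentially correct: $h$ from sorting by the ideal class of the weighted content, $1/\zeta_K(|\w|)$ from M\"obius inversion (the point to make explicit is that the tuples whose weighted content is divisible by an ideal $\a$ are exactly those with $x_i\in\a^{w_i}$, so the density is $\Nm(\a)^{-|\w|}$), the covolume $|\Delta(K)|^{(n+1)/2}$, the archimedean volume $(2^{r_1}(2\pi)^{r_2})^{n+1}$, the regulator together with the Jacobian $|\w|^{r_1+r_2-1}$ from the $\w$-twisted logarithmic embedding, and $\gcd(\w,\omega)/\omega$ because $\mu_\omega$ acts through its quotient by $\mu_{\gcd(\w,\omega)}$. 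You should also pin down the height normalization at the finite places (Deng's $\max_i|x_i|_v^{1/w_i}$ versus the floor-of-$\ord_v/w_i$ convention used elsewhere in this paper), since the $\zeta_K(|\w|)$ factor is sensitive to it.

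The genuine gap is the error term. The clause ``a Lipschitz-type boundary count yields an error of order $B^{|\w|-\min_i w_i/d}$'' compresses the entire technical content of Deng's paper into a heuristic. The region actually being counted is not $\{H_\w\le B\}$ but its intersection with a fundamental domain for the $\w$-twisted action of $\O_K^*$, which is unbounded in the unit directions; one must produce Lipschitz parametrizations of the codimension-one boundary of this truncated domain whose constants are uniform under the anisotropic dilation in which the $i$-th coordinate block scales like $B^{w_i/d}$, verify that the boundary estimate really saves the full factor $B^{\min_i w_i/d}$, and then sum the M\"obius-inverted error terms over ideals without losing that saving. Likewise the $B\log(B)$ alternative is not merely ``handled by Schanuel's original argument'': you must show the dichotomy in the statement is exhaustive, i.e.\ that the degenerate case occurs only for $K=\Q$ with $\w=(1,1)$, where the unit rank is zero and the secondary term of the divisor count is no longer dominated by the boundary error. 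To close these gaps you would either carry out Deng's Lipschitz estimates in full or switch to Darda's height-zeta-function method, which replaces the boundary analysis by analytic continuation.
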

     
 Schanuel's Theorem, which is the case of Theorem \ref{thm:SDD} with $\w=(1,\dots,1)$, was extended to global function fields independently  by DiPippo \cite{Dip90} and Wan \cite{Wan91}. In order to state their result, we set some notation.

Let $X$ be an absolutely irreducible smooth projective curve of genus $g$ over $\F_q$. Let $\F_q(X)$ denote the function field of $X$ and let $h$ denote the class number of $\F_q(X)$.

Let $\Div(X)$ denote the set of divisors on $X$ and let 
\[
\Div^+(X)\defeq\left\{\sum_{P\in X} n_P P\in\Div(X):n_P\in \Z_{\geq 0} \text{ for all } P\right\}
\]
denote the set of effective divisors on $X$.
Define by
\[
 Z(X,t)\defeq\exp\left(\sum_{d=1}^\infty \# X(\F_{q^d}) \frac{t^d}{d}\right)=\sum_{D\in \Div^+(X)} t^{\deg(D)}
\]
the classical zeta function of $X$.
Let $\zeta_X(s)=Z(X,q^{-s})$ denote the Dedekind zeta function of $X$.

 We now define the \emph{height} of an $\F_q(X)$-rational point of projective space $x=[x_0:x_1:\cdots:x_n]\in \P^n(\F_q(X))$. For $y\in \F_q(X)$, let $(y)$
  denote the corresponding principal divisor. Let $\inf_{i}(x_i)$ denote the greatest divisor $D$ of $X$ such that $D\leq (x_i)$ for all $i$. Then we define the \emph{height} of $x$ to be
\[
\Ht_X(x)\defeq-\deg(\inf_i(x_i)).
\] 
Let
\[
A_d(\P^n)\defeq \{x\in \P^n(\F_q(X)) : \Ht_X(x)=d\}
\]
denote the number of $\F_q(X)$-rational points on $\P^n$ of height  $d$.

\begin{theorem}[DiPippo-Wan]\label{thm:DiPippo-Wan}
With notation as above, for any $\varepsilon>0$ there exist polynomials $b_i(d)$ in $d$ (whose coefficients are algebraic numbers) and algebraic integers $\alpha_i$, with $|\alpha_i|=\sqrt{q}$, such that
\begin{align*}
A_d(\P^n)&=\frac{hq^{(n+1)(1-g)}}{\zeta_X(n+1)(q-1)}q^{(n+1)d}+\sum_{i=1}^{2g} b_i(d)\alpha_i^d\\
&=\frac{hq^{(n+1)(1-g)}}{\zeta_X(n+1)(q-1)}q^{(n+1)d}+O(q^{d/2+\varepsilon}).
\end{align*}
\end{theorem}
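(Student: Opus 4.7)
The plan is to parametrize points of $\P^n(\F_q(X))$ by line bundles together with primitive $(n+1)$-tuples of global sections, encode the count via a Möbius-type convolution, and extract the asymptotic from the Weil decomposition of $Z(X,t)$.

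A point $x=[x_0:\cdots:x_n]\in\P^n(\F_q(X))$ of height $d$ determines a line bundle $L = \O_X(-\inf_i(x_i))$ of degree $d$ together with an $(n+1)$-tuple of sections $(s_0,\dots,s_n)\in H^0(X,L)^{n+1}$ with no common zero; conversely, such data recovers $x$, and two tuples give the same point precisely when they differ by an $\F_q^*$-scalar. Writing $P(L)$ for the number of such \emph{primitive} tuples, this gives $(q-1)A_d(\P^n) = \sum_{[L]\in\Pic^d(X)} P(L)$. To get a handle on $P(L)$, I partition the nonzero tuples in $H^0(X,L)^{n+1}$ by their greatest common effective divisor of zeros $E$: multiplication by the canonical section of $\O_X(E)$ identifies the tuples of gcd exactly $E$ with the primitive tuples in $H^0(X,L(-E))^{n+1}$, yielding $q^{(n+1)h^0(L)}-1 = \sum_{E\geq 0}P(L(-E))$. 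Summing over $[L]\in\Pic^d(X)$ and using that $[L]\mapsto[L(-E)]$ is a bijection $\Pic^d(X)\to\Pic^{d-e}(X)$ for every effective $E$ of degree $e$, I obtain the convolution identity
\[
B_d := \sum_{[L]\in\Pic^d(X)}\bigl(q^{(n+1)h^0(L)}-1\bigr) = \sum_{e=0}^{d} A_e\cdot(q-1)A_{d-e}(\P^n),
\]
where $A_e = \#\{D\in\Div^+(X) : \deg D = e\}$. Equivalently, in generating function form, $\sum_d A_d(\P^n)t^d = \bigl((q-1)Z(X,t)\bigr)^{-1}\sum_d B_d t^d$.

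For $d\geq 2g-1$, Riemann-Roch forces $h^0(L) = d-g+1$ for every $[L]\in\Pic^d(X)$, so $B_d = h\bigl(q^{(n+1)(d-g+1)}-1\bigr)$; summing the geometric tails,
\[
\sum_{d\geq 0} B_d t^d = F(t) + \frac{h\,q^{(n+1)g}\,t^{2g-1}}{1-q^{n+1}t} - \frac{h\,t^{2g-1}}{1-t},
\]
where $F(t)$ is an explicit polynomial absorbing the range $0\leq d\leq 2g-2$. Writing $Z(X,t) = P(t)/((1-t)(1-qt))$ with $P(t) = \prod_{i=1}^{2g}(1-\alpha_i t)$ and $|\alpha_i|=\sqrt{q}$, the generating function $\sum_d A_d(\P^n) t^d$ becomes a rational function whose only pole with $|t|<1/\sqrt{q}$ is a simple pole at $t=q^{-(n+1)}$. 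A partial-fraction expansion produces the asymptotic. Computing the residue at $t = q^{-(n+1)}$ and using $Z(X,q^{-(n+1)}) = \zeta_X(n+1)$ gives the main term
\[
\frac{1}{q-1}\cdot\lim_{t\to q^{-(n+1)}}\frac{(1-q^{n+1}t)\,h\,q^{(n+1)g}\,t^{2g-1}}{Z(X,t)}\cdot q^{(n+1)d} = \frac{h\,q^{(n+1)(1-g)}}{\zeta_X(n+1)(q-1)}\,q^{(n+1)d},
\]
while the poles at $t = \alpha_i^{-1}$ yield the error sum $\sum_i b_i(d)\alpha_i^d$, with $b_i(d)$ polynomials of degree bounded by the multiplicity of $\alpha_i$ as a root of $P(t)$.

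The main obstacle is the boundary range $0\leq d\leq 2g-2$: here Riemann-Roch does not apply uniformly, and $h^0(L)$ genuinely depends on $[L]\in\Pic^d(X)$ through $h^1$. Fortunately these irregular contributions are absorbed into the polynomial $F(t)$, which is cancelled against zeros of $P(t)$ and $(1-t)$ or otherwise pushed into the error expansion, so they do not disturb the main term. The Weil bound $|\alpha_i|=\sqrt{q}$ then yields the stated $O(q^{d/2+\varepsilon})$ remainder, with the $\varepsilon$ accounting for polynomial factors from repeated roots of $P(t)$.
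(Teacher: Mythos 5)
Your argument is correct and is essentially the specialization to $\w=(1,\dots,1)$ of the method the paper uses to prove its general Theorem \ref{thm:Main}: your convolution identity $B_d=\sum_{e}A_e\cdot(q-1)A_{d-e}(\P^n)$ is exactly the bijection underlying Theorem \ref{thm:RationalityProjZeta} (i.e.\ $Z(\P(\w)(\F_q(X)),t)=Z_\w(X,t)/Z(X,t)$), and your Riemann--Roch evaluation of $B_d$ for $d\geq 2g-1$ followed by partial fractions and the Weil bounds is Theorem \ref{thm:DivZetaRationality} plus the residue computation in the proof of Theorem \ref{thm:Main}. (The paper states DiPippo--Wan without proof, citing the original sources, but its proof of the weighted generalization proceeds by precisely your route, so no further comparison is needed.)
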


\begin{remark}
Recently the author has given a new proof of the asymptotic in Theorem \ref{thm:DiPippo-Wan} using geometry-of-numbers techniques \cite{Phi24}, in the spirit of Schanuel's proof of Theorem \ref{thm:SDD} when $\w=(1,\dots,1)$. 
\end{remark}

In this article we extend Theorem \ref{thm:DiPippo-Wan} to the case of weighted projective stacks, giving an analog of Theorem \ref{thm:SDD} for global function fields. Let $A_d(\w)$ denote the number of $\F_q(X)$-rational points of height $d$ on the weighted projective stack $\mcP(\w)$.

\begin{theorem}\label{thm:Main}
Let notation be as above, and set $w_{\min}\defeq \min_i\{w_i\}$. Then there exist polynomials $b_i(d)$ in $d$ (whose coefficients are algebraic numbers) and algebraic integers $\alpha_i$, with $|\alpha_i|=\sqrt{q}$, such that the number of $K$-rational points on the weighted projective stack $\mcP(\w)$ of height $d$ is
\begin{align*}
A_d(\w)
&=\sum_{\substack{\u\in 2^{\w}\\ |\u|\geq 2}} \sum_{\substack{\v\in 2^\w \\ \u\subseteq \v}} \frac{\gcd(\v,q-1)(-1)^{\#\v-\#\u}hq^{\#\u(1-g)}}{(q-1)\zeta_X(|\u|)} q^{d |\u|} +\sum_{i=1}^{2g} b_i(d) \alpha_i^d\\
&=\frac{h q^{\#\w(1-g)}\gcd(\w,q-1)}{\zeta_X(|\w|)(q-1)} q^{|\w|d}+O\left(q^{d(|\w|-w_{\min})}\right).
\end{align*}
\end{theorem}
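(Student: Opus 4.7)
The plan is to extend the DiPippo--Wan method, accounting for the new ingredient of weighted scaling. First, I would stratify $\P(\w)(\F_q(X))$ by the support $\u\subseteq\{0,1,\ldots,n\}$ of a representative tuple: writing $N_d(\u)$ for the number of height-$d$ points with coordinates nonzero on exactly this set, one has $A_d(\w)=\sum_{\u\neq\emptyset}N_d(\u)$. Given such a point with representative $(x_i)_{i\in\u}$, the divisor $E:=-\inf_i\lfloor(x_i)/w_i\rfloor$ has $\deg E=d$, each $x_i$ lies in $H^0(X,\O(w_iE))\setminus\{0\}$, and the infimum condition amounts to the coprimality constraint that no $F>0$ makes $x_i\in H^0(X,\O(w_i(E-F)))$ for every $i\in\u$. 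Since the weighted action $(x_i)\mapsto(\lambda^{w_i}x_i)$ of $\F_q(X)^*$ sends $E$ to $E-(\lambda)$, the orbits fiber over $\Pic^d(X)$; after fixing a representative in each class, the residual $\F_q^*$-action has stabilizer $\mu_{\gcd(g_\u,q-1)}$ on coprime tuples, where $g_\u:=\gcd\{w_i:i\in\u\}$. This accounts for the factor $\gcd(\u,q-1)/(q-1)$ in the statement.

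Second, I would encode the coprimality by M\"obius inversion over squarefree effective divisors $F$, rewriting the count of coprime tuples for a fixed $[E]\in\Pic^d(X)$ as
\[
\sum_{\substack{F\geq 0\\ \text{squarefree}}}(-1)^{\omega(F)}\prod_{i\in\u}\bigl(q^{h^0(X,\O(w_i(E-F)))}-1\bigr),
\]
and then expanding the inner product over $\v\subseteq\u$ with sign $(-1)^{\#\u-\#\v}$. Swapping the order of the $\u$- and $\v$-summations (and relabeling $\u\leftrightarrow\v$ to match the paper's convention $\u\subseteq\v$), together with the identity $\sum_{F\text{ squarefree}}(-1)^{\omega(F)}q^{-s\deg F}=1/\zeta_X(s)$ applied at $s=|\u|=\sum_{i\in\u}w_i$, produces the factor $\zeta_X(|\u|)^{-1}$. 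Riemann--Roch, in the form $h^0(X,\O(D))=\deg D+1-g$ for $\deg D\geq 2g-1$, then supplies the $q^{\#\u(1-g)}q^{d|\u|}$ factor.

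Finally, to produce the exact formula, I would package the generating function $\sum_d A_d(\w)t^d$ as a rational function in $t$ whose denominator involves $Z(X,t)=L(t)/((1-t)(1-qt))$, with $L(t)=\prod_{i=1}^{2g}(1-\alpha_i t)$ and $|\alpha_i|=\sqrt q$ by Weil's theorem for curves. A partial-fraction decomposition extracts the main term (from the dominant pole at $t=q^{-|\w|}$) together with the $\sum_i b_i(d)\alpha_i^d$ tail (from the Weil zeros). The leading contribution comes from $\u=\w$: the inner signed sum $\sum_{\v\supseteq\w}(-1)^{\#\v-\#\w}\gcd(\v,q-1)$ collapses to $\gcd(\w,q-1)$, yielding the advertised coefficient. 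I expect the chief obstacle to be accounting rather than any single deep step: one must verify that every subleading $\u\subsetneq\w$ contributes either to the Weil error $\sum_i b_i(d)\alpha_i^d$ or to a term of size at most $q^{d(|\w|-w_{\min})}$, the latter bound being saturated by $\u=\w\setminus\{i^*\}$ with $w_{i^*}=w_{\min}$, and one must handle the low-degree range $\deg(E-F)<2g-1$ where $h^1$ is nonzero so that Riemann--Roch is applied in its exact rather than leading-order form.
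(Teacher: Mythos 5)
Your proposal is correct and follows essentially the same route as the paper: inclusion--exclusion over the support of the coordinate tuple, Riemann--Roch applied to $\ell_{w}(D)=\ell(wD)$, a primitivity sieve producing the factor $\zeta_X(|\u|)^{-1}$, and a partial-fraction decomposition of the resulting rational height zeta function against $Z(X,t)=P(t)/((1-t)(1-qt))$. The only presentational difference is that you run the sieve by coefficientwise M\"obius inversion over squarefree effective divisors, whereas the paper packages the identical inversion as the identity $Z(\P(\w)(\F_q(X)),t)=Z_\w(X,t)/Z(X,t)$ via an explicit bijection; since $1/Z(X,t)=\sum_{F\geq 0}\mu(F)t^{\deg F}$, these are the same computation, with your version requiring the truncation/low-degree bookkeeping you already flag and the paper's version absorbing it into the polynomials $Q_\u(t)$. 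Incidentally, your exponent $q^{d|\u|}$ is the correct one: the relevant pole of $Z(\u,t)$ is at $t=q^{-|\u|}$, so the $q^{d\#\u}$ appearing in the first displayed line of the theorem is a typo.
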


The idea of the proof is to show that the height zeta function associated to $\mcP(\w)$ over $\F_q(X)$ is rational. The methods are similar to those used by DiPippo and Wan. In \cite[Theorem 4.1.1]{Phi24}, the author gives an alternate proof of the asymptotic in Theorem \ref{thm:Main} using geometry-of-numbers techniques. The asymptotic in Theorem \ref{thm:Main} can also be obtained as a special case of the main result of \cite{DY23A}.

Counting points on weighted projective stacks fits into a more general framework of counting points of bounded height on stacks, as outlined in \cite{ESZB23} and \cite{DY24}. 

A motivation for counting points on stacks is that many interesting moduli spaces are stacks. Some key examples are the moduli spaces of curves of genus $g$; of Abelian varieties of dimension $d$; and of number fields with a fixed degree, bounded discriminant, and prescribed Galois group. Questions about counting such objects can be viewed as questions about counting points on stacks. In the context of this paper, results for counting points on weighted projective stacks can be used to count elliptic curves, hyperelliptic curves, and Drinfeld modules (see, e.g., \cite{BN22,HP19,Phi22a, Phi22c,  HP23, PS21, BPS22a, BPS22b, Phi24}).

 Just as Schanuel's theorem was a catalyst for the Batyrev-Manin Conjecture (for counting points on projective varieties) we hope that this note will motivate further work on counting points on weighted projective substacks. Darda and Yasuda have formulated a stacky analog of the Batyrev-Manin Conjecture \cite{DY24} (see also \cite[Conjecture 4.14]{ESZB23}). This generalization of the Batyrev-Manin conjecture encompasses not only the Batyrev-Manin Conjecture for varieties, but also Malle's Conjecture (for counting number fields of fixed degree, bounded discriminant, and with a prescribed Galois group).   
Beyond the body of work on the Batyrev-Manin Conjecture and Malle's conjecture, there has been recent work towards the stacky Batyrev-Manin conjecture outside of these cases (see, e.g., \cite{NX20, DY23B, Yin23, DY23A}). 

\begin{remark}
Computing exact formulas, or even asymptotics, for the number of points of bounded height on a stack can be quite difficult in general, one can nevertheless ask for upper bounds. For varieties, one of the main tools in this direction has been the \emph{determinant method}. This method was pioneered by Bombieri and Pila \cite{BP89} and further developed by Heath-Brown \cite{HB02} and Salberger \cite{Sal07,Sal23}. Most relevant to this paper are the extensions of the determinant method to global function fields \cite{Sed17,PS22} and to weighted projective spaces over the rational numbers \cite{Xia17}. It would be interesting to study the extent to which the determinant method can be extended to stacks over arbitrary global fields.
\end{remark}

\section{Weighted projective stacks}

\begin{definition}[weighted projective stack]\label{def:WPS}
Given an $(n+1)$-tuple of positive integers $\w=(w_0,w_1,\dots,w_n)\in \Z_{\geq 0}$, the \emph{weighted projective stack} $\mcP(\w)$ is defined to be the quotient stack
\[
\mcP(\w)\defeq [(\A^{n+1}-\{0\})/\G_m],
\]
with respect to the action
\begin{align*}
\ast_\w:\mathbb{G}_m\times(\A^{n+1}-\{0\}) &\to (\A^{n+1}-\{0\})\\
(\lambda,(x_0,\dots,x_n)) &\mapsto \lambda\ast_\w (x_0,\dots,x_n)\defeq (\lambda^{w_0}x_0,\dots, \lambda^{w_n}x_n).
\end{align*}
\end{definition}

Note that the weighted projective stack with weights $\w=(1,1,\dots,1)$ is the usual projective space, $\P^n$. 

\begin{remark} As a stack $\mcP(\w)$ is
 smooth (since it is the quotient stack of a smooth scheme by a smooth group scheme) and
proper (by the valuative criterion for stacks). The point $[(a_0,\dots,a_n)]\in\mcP(w_0,\dots,w_n)$ has stabilizer $\mu_m$ where $m=\gcd(w_i : a_i\neq 0)$. When $K$ is a field of characteristic relatively prime to $m$, we have that $\mu_m$ is finite and reduced over $K$. When $K$ is not relatively to $m$ then $\mu_m$ is not reduced over $K$. It follows that $\mcP(w_0,\dots,w_n)$ is a Deligne-Mumford stack over a field $K$ if and only if the characteristic of $K$ is relatively prime to each of the weights $w_i$.  In particular, over fields of characteristic $p$ the weighted projective stack $\mcP(w_0,\dots,w_n)$ is a Deligne-Mumford stack if and only if $p\nmid w_i$ for all $i$. 
\end{remark}

\begin{definition}[weighted divisor]
Let $y\in \F_q(X)$ and $w\in \Z_{> 0}$. The \emph{$w$-weighted divisor} of $y$ is defined to be
\[
(y)_w\defeq \sum_{P\in X} \left\lfloor \frac{\ord_P(y)}{w}\right\rfloor P,
\]
where $P$ ranges over the points of $X$.
\end{definition}

\begin{definition}[(logarithmic) height]
The \emph{(logarithmic) height} of an $\F_q(X)$-rational point, $y=[y_0:y_1:\dots:y_n]\in\mcP(\w)(\F_q(X))$, of a weighted projective stack is defined as
\begin{align*}
\Ht_{\w,X}(y)\defeq -\deg\left(\inf_i ((y_i)_{w_i})\right).
\end{align*}
\end{definition}

\begin{remark}
This height can also be realized as the \emph{stacky height} associated to the tatological bundle of $\mcP(\w)$ (see \cite{ESZB23} for details).
\end{remark}

We shall often abbreviate $\Ht_{\w,X}$ to $\Ht_{\w}$ when the function field is clear from context.

Define a counting function
\[
A_d(\w)\defeq \#\{y\in \mcP(\w)(\F_q(X)): \Ht_{\w,X}(y)=d\}
\]
which counts the number of $\F_q(X)$-rational points of height $d$ on $\mcP(\w)$. For example
\[
A_d(\underbrace{1,1,\dots,1}_{n+1})=A_d(\P^n).
\]

We now define weighted versions of linear equivalence, Picard groups, and Riemann-Roch spaces.

\begin{definition}[$w$-linearly equivalent divisors]
 Let $w\in\Z_{>0}$. Two divisors $D,D'\in \Div(X)$ are said to be \emph{$w$-linearly equivalent} if $D-D'=(f)_w$ for some $f\in \F_q(X)$.
\end{definition} 

\begin{definition}[$w$-Picard group]
The \emph{$w$-Picard group} is defined to be the group of $w$-equivalence classes of divisors on $X$, and is denoted $\Pic_w(X)$.
\end{definition}

\begin{definition}[$w$-Riemann-Roch space]
For $D\in \Div(X)$ we define the \emph{$w$-Riemann-Roch space} attached to $D$ as
\[
\mcL_w(D)\defeq \{f\in \F_q(X): (f)_w+D\geq 0\} \cup \{0\}.
\]
Set $\ell_w(D)\defeq \dim_{\F_q}(\mcL_w(D))$.
\end{definition} 

When $w=1$ we recover the usual definitions of Picard group and Riemann-Roch space. In these cases we will write $\Pic(X)$ for $\Pic_1(X)$, $\mcL(D)$ for $\mcL_1(D)$, and $\ell(D)$ for $\ell_1(D)$.

\section{Weighted divisor zeta functions}

Let $\w=(w_0,\dots,w_n)$ be an $(n+1)$-tuple of positive integers. Let $\#\w$ denote the length of $\w$, let $|\w|$ denote the sum of components $w_0+w_1+\cdots+w_n$, and let $2^{\w}$ denote the set of sub-tuples of $\w$. Denote by $\gcd(\w,q-1)$ the greatest common divisor of the set $\{w_0,w_1,\dots,w_n, q-1\}$.

 Consider the $\w$-weighted action of $\F_q^\times$ on $\F_q(X)^{\#\w}-\{0\}$ induced by $\ast_\w$ (as defined in Definition \ref{def:WPS}). With respect to this action, define the following set of $\F_q^\times$-orbits of points in the affine cone of $\mcP(\w)$:
\[
B_\w(D)\defeq \left\{ x=(x_0,\dots,x_n)\in \left(\F_q(X)^{\#\w}-\{0\}\right)/\F_q^\times : -\inf_{i}(x_i)_{w_i}\leq D\right\}.
\]

For what follows we will choose a degree one divisor $D^\ast$ on $X$ once and for all (or at least for the remainder of this paper).

\begin{definition}[weighted divisor zeta function]
Let $D_1,\dots,D_h$ generate the torsion part of the divisor class group. For $\w=(w_0,\dots,w_n)$ define the \emph{$\w$-weighted divisor zeta function} of $X$ to be
\[
Z_\w(X,t)\defeq \sum_{j=1}^h\sum_{d=0}^\infty  \# B_{\w}(D_j+dD^\ast) t^{d}.
\]
\end{definition}

\begin{theorem}[Rationality of weighted divisor zeta functions]\label{thm:DivZetaRationality}
%Let $X$ be a non-singular genus $g$ curve and let $\w=(w_0,\dots,w_n)$ be an $(n+1)$-tuple of positive integers. Then, for 
For each $\u\in 2^\w$, there exists an integral polynomial $P_\u(t)$ of degree at most $1+(2g-2)/\min_i(u_i)$ such that
\[
Z_\w(X,t)=\sum_{\v\in 2^{\w}} \sum_{\u\in 2^\v} \frac{\gcd(\v,q-1)(-1)^{\#\v-\#\u} P_\u(t)}{(q-1)(1-q^{|\u|}t)}.
\] 
\end{theorem}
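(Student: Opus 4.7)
The plan is to decompose each set $B_\w(D_j+dD^\ast)$ according to which subset of coordinates is nonzero, apply inclusion--exclusion, and then invoke classical Riemann--Roch on the weighted Riemann--Roch spaces to extract the rational form.

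For a sub-tuple $\v\subseteq\w$, let $B_\v^\circ(D)\subseteq B_\w(D)$ denote the orbits whose representatives have exactly the coordinates indexed by $\v$ nonzero; then $B_\w(D)=\bigsqcup_{\emptyset\ne\v\subseteq\w}B_\v^\circ(D)$. A representative of such an orbit lies in $\prod_{i\in\v}\bigl(\mcL_{w_i}(D)\setminus\{0\}\bigr)$, and its stabilizer under the $\w$-weighted action of $\F_q^\times$ is $\mu_{\gcd(\v,q-1)}(\F_q)$, so
\[
|B_\v^\circ(D)|=\frac{\gcd(\v,q-1)}{q-1}\prod_{i\in\v}\bigl(q^{\ell_{w_i}(D)}-1\bigr).
\]
Expanding each product by $\prod_{i\in\v}(q^{a_i}-1)=\sum_{\u\subseteq\v}(-1)^{\#\v-\#\u}q^{\sum_{i\in\u}a_i}$ and summing over $\v$ yields
\[
|B_\w(D)|=\sum_{\v\subseteq\w}\sum_{\u\subseteq\v}\frac{\gcd(\v,q-1)(-1)^{\#\v-\#\u}}{q-1}\,q^{\sum_{i\in\u}\ell_{w_i}(D)},
\]
where the $\v=\emptyset$ term is absorbed by the convention $\gcd(\emptyset,q-1)=0$.

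Next, the definition of the weighted Riemann--Roch space unpacks to $\mcL_w(D)=\mcL(wD)$, where $wD=\sum_P wD_P\,P$, since $\lfloor\ord_P(f)/w\rfloor\geq -D_P$ iff $\ord_P(f)\geq-wD_P$. Consequently, classical Riemann--Roch gives $\ell_{w_i}(D)=w_i\deg(D)+1-g$ whenever $w_i\deg(D)>2g-2$. Applying this at $D=D_j+dD^\ast$ for every $i\in\u$, the sum $\sum_{i\in\u}\ell_{w_i}(D_j+dD^\ast)$ equals $|\u|(d+\deg D_j)+\#\u\,(1-g)$ once $d+\deg D_j>(2g-2)/\min_i u_i$. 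Hence the generating function
\[
F_\u(t):=\sum_{j=1}^{h}\sum_{d=0}^\infty q^{\sum_{i\in\u}\ell_{w_i}(D_j+dD^\ast)}t^d
\]
has positive integer coefficients $c_d$ satisfying $c_d=q^{|\u|}c_{d-1}$ beyond the Riemann--Roch threshold. Therefore $P_\u(t):=(1-q^{|\u|}t)F_\u(t)$ is a polynomial with integer coefficients and degree at most $1+(2g-2)/\min_i u_i$. Combining this with the inclusion--exclusion expression from the previous step gives the claimed rational formula.

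The main technical obstacle is pinning down the degree bound $1+(2g-2)/\min_i u_i$ precisely. This requires carefully choosing the torsion representatives $D_1,\dots,D_h$ of the class group (ensuring their degrees are controlled from below) and tracking exactly the smallest $d$ at which the linear form of $\ell_w$ begins to hold uniformly in $i\in\u$ and $j$; the remaining integrality of $P_\u(t)$, the cancellation of the $\v=\emptyset$ contribution, and the interchange of finite and infinite sums are then all routine.
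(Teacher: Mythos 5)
Your proposal is correct and follows essentially the same route as the paper: decompose $B_\w(D)$ by the support $\v$ of the nonzero coordinates, count orbits via the stabilizer $\mu_{\gcd(\v,q-1)}$, expand $\prod(q^{\ell_{w_i}(D)}-1)$ by inclusion--exclusion, identify $\mcL_w(D)=\mcL(wD)$, and apply Riemann--Roch to make the auxiliary series $F_\u(t)$ (the paper's $Z(\u,t)$) eventually geometric with ratio $q^{|\u|}$. The ``technical obstacle'' you flag at the end is not actually an issue: the $D_j$ represent the torsion (degree-zero) part of the class group, so $\deg(D_j+dD^\ast)=d$ and the Riemann--Roch threshold $d>(2g-2)/\min_i u_i$ gives the stated degree bound immediately.
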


\begin{proof}
%Since $X$ is a non-singular curve $\Pic(X)$ is a finitely generated $\Z$-module of rank 1. Let $D_1,\dots, D_h$ be the torsion elements of $\Pic(X)$ and let $D^\ast$ be a degree 1 divisor.

 For $\u=(u_0,\dots,u_r)\in \Z^{r+1}_{>0}$, set
\[
Z(\u,t)\defeq \sum_{j=1}^h\sum_{d=0}^\infty \left(\prod_{i=0}^{\#\u-1} q^{\ell(u_i(D_j+dD^\ast))}\right)t^d.
\]
Riemann-Roch implies that for any divisor $D$ of degree one, if $u_i d > 2g-2$ then $\ell(u_idD)=u_id-g+1$. Therefore there are integral polynomials $Q_\u(t)$ and $P_\u(t)$, of degrees at most $(2g-2)/\min_{i}(u_i)$ and $1+(2g-2)/\min_{i}(u_i)$ respectively, such that
\begin{align*}
Z(\u,t)&=Q_\u(t) + \sum_{d=0}^\infty h\prod_{j=0}^{\#\u-1} q^{u_jd-g+1}t^d\\
&= Q_\u(t)+ q^{\#\u(1-g)} h\sum_{d=0}^\infty q^{|\u|d}t^d\\
&=Q_\u(t)+\frac{h q^{\#\u\cdot(1-g)}}{1-q^{|\u|}t}\\
&=\frac{P_\u(t)}{1-q^{|\u|}t}.
\end{align*}

Let $x=(x_0,\dots,x_n)\in B_\w(D)$ and let $\v\subset 2^\w$ be the subtuple of $\w$ corresponding to the non-zero coordinates of $x$ (e.g., if $\w=(1,2,3)$ and $x=(2,0,7)$, then $\v=(1,3)$). Then the number of elements of $\F_q(X)^{\#\w}-\{0\}$ contained in $x$ is $\gcd(\v, q-1)/(q-1)$. It follows that
\begin{align*}
\# B_\w(D)
&= \sum_{\v\in 2^{\w}} \frac{\gcd(\v,q-1)}{q-1} \prod_{v_i\in \v} (\#\mcL_{v_i}(D)-1)\\
&=\sum_{\v\in 2^{\w}}\frac{\gcd(\v,q-1)}{q-1} \sum_{\u\in 2^\v} (-1)^{\#\v-\#\u} \prod_{u_i\in \u} q^{\ell_{u_i}(D)}.
\end{align*}%Interpret $\v$ AS THE non-zero 'coordinates'
We now have that
\begin{align*}
Z_\w(X,t)
&= \sum_{j=1}^h\sum_{d=0}^\infty  \sum_{\v\in 2^{\w}}\frac{\gcd(\v,q-1)}{q-1} \sum_{\u\in 2^\v} (-1)^{\#\v-\#\u} \left(\prod_{u_i\in \u} q^{\ell_{u_i}(D_j+dD^\ast)}\right) t^d\\
&= \sum_{\v\in 2^{\w}}\frac{\gcd(\v,q-1)}{q-1} \sum_{\u\in 2^\v} (-1)^{\#\v-\#\u} Z(\u,t)\\
&=\sum_{\v\in 2^{\w}} \sum_{\u\in 2^\v} \frac{\gcd(\v,q-1)(-1)^{\#\v-\#\u} P_\u(t)}{(q-1)(1-q^{|\u|}t)}.
\end{align*}
\end{proof}

\section{Height zeta functions of weighted projective stacks}

\begin{definition}[height zeta function]
Define the \emph{height zeta function} of the weighted projective stack $\mcP(\w)$ defined over the function field $\F_q(X)$ of the curve $X$ to be
\[
Z(\mcP(\w)(\F_q(X)),t)\defeq \sum_{d=0}^\infty \# A_d(\w)t^d.
\]
\end{definition}

\begin{theorem}\label{thm:RationalityProjZeta}
The height zeta function is related to the divisor zeta function as follows,
\[
Z(\mcP(\w)(\F_q(X)),t)=\frac{Z_{\w}(X,t)}{Z(X,t)}.
\]
\end{theorem}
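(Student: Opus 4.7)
The plan is to prove the equivalent identity
\[
Z_\w(X,t) \;=\; Z(\P(\w)(\F_q(X)),t)\cdot Z(X,t)
\]
by comparing coefficients of $t^d$. The key input is the shift identity
\[
-\inf_i\,(\lambda^{w_i} x_i)_{w_i} \;=\; -\inf_i\,(x_i)_{w_i} - (\lambda), \qquad \lambda \in \F_q(X)^\times,
\]
which records how the ``height divisor'' $D(x):=-\inf_i(x_i)_{w_i}$ transforms under the $\ast_\w$-action on the affine cone. As an immediate consequence, $[x]\mapsto[\lambda\ast_\w x]$ is a bijection $B_\w(D)\cong B_\w(D-(\lambda))$, so $\#B_\w(D)$ depends only on the class $[D]\in\Pic(X)$. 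Since $\deg D^\ast = 1$ and $D_1,\dots,D_h$ represent $\Pic^0(X)$, the divisors $\{D_j+dD^\ast\}$ enumerate $\Pic^d(X)$, so
\[
Z_\w(X,t) \;=\; \sum_{d=0}^\infty \sum_{[D]\in\Pic^d(X)} \#B_\w(D)\, t^d.
\]

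Next, for a fixed $D$, I would partition $B_\w(D)$ according to the underlying point $y\in\P(\w)(\F_q(X))$, i.e., by which $\F_q(X)^\times$-orbit contains a given $\F_q^\times$-orbit. Fix $y$ and a representative $x$ of $y$; then $[D(x)]\in\Pic(X)$ is independent of $x$ and of degree $\Ht_\w(y)$, so I denote this class by $[D_y]$. The $\F_q^\times$-orbits of representatives of $y$ are indexed by $\F_q(X)^\times/\F_q^\times$ (the stabilizer of $x$ already lies inside $\F_q^\times$, so it contributes no extra quotient). Applying the shift identity, the orbit of $\lambda\ast_\w x$ lies in $B_\w(D)$ if and only if $\lambda \in \mcL(D-D(x))\setminus\{0\}$, and dividing by $\#\F_q^\times = q-1$ yields
\[
\#B_\w(D) \;=\; \sum_{y\in\P(\w)(\F_q(X))} \#|D-D_y|,
\]
where $\#|E| := (q^{\ell(E)}-1)/(q-1)$ is the number of effective divisors in the class $[E]$.

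Finally, I would combine the two steps. For fixed $y$ with $d_y:=\Ht_\w(y)$, the class $[D-D_y]$ runs exactly once over $\Pic^{d-d_y}(X)$ as $[D]$ runs over $\Pic^d(X)$, so the inner sum equals the total number of effective divisors of degree $d-d_y$, which is the $(d-d_y)$-th coefficient of $Z(X,t)$. Swapping the order of summation yields the Cauchy product
\[
Z_\w(X,t) \;=\; \Bigl(\sum_{y\in\P(\w)(\F_q(X))} t^{d_y}\Bigr)\cdot Z(X,t) \;=\; Z(\P(\w)(\F_q(X)),t)\cdot Z(X,t),
\]
which is the theorem. The main obstacle is the bookkeeping in the middle step: one must match the $\F_q^\times$-orbits of representatives of a single $y$ that fit inside $B_\w(D)$ with effective divisors in a single linear equivalence class. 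The shift identity, together with the isomorphism between $\F_q(X)^\times/\F_q^\times$ and the group of principal divisors, reduces this matching to the familiar Riemann-Roch counting $\#\mcL(E)\setminus\{0\}=(q-1)\cdot\#|E|$.
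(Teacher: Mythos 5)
Your proof is correct and follows essentially the same route as the paper: the paper establishes the Cauchy product $Z_\w(X,t)=Z(\P(\w)(\F_q(X)),t)\cdot Z(X,t)$ by exhibiting a bijection between $\bigsqcup_{[D]\in\Pic^d(X)}B_\w(D)$ and pairs consisting of an effective divisor and a point of $\P(\w)$, via the same shift $D''\mapsto D''-\inf_i(z_i)_{w_i}$ that underlies your identity $D(\lambda\ast_\w x)=D(x)-(\lambda)$. Your version merely makes explicit the orbit bookkeeping (stabilizers in $\F_q^\times$, and the count $\#(\mcL(E)\setminus\{0\})/(q-1)=\#|E|$) that the paper's bijection leaves implicit.
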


\begin{proof}
For a divisor $D\in\Div(X)$ define the set 
\[
N(D)\defeq \{D'\in\Div^+(X) : [D]=[D'] \text{ in }\Pic(X)\}
\]
of effective divisors linearly equivalent to $D$.

We now show that there is a bijection
\[
 \bigsqcup_{\substack{[D']\in \Div^+(X)\\ \deg(D')=d'\leq d}} N(D')\times A_{d-d'}(\w)\leftrightarrow
 \bigsqcup_{\substack{[D]\in \Pic(X)\\ \deg(D)=d}} B_\w(D). 
\]
Elements on the left can be specified by pairs $(D'',z)$, where $D''\in N(D')$ and $z\in A_{d-d'}(\w)$. Elements on the right can be specified by pairs $(D,y)$, where $D$ is a degree $d$ divisor and $y\in B_\w(D)$.  The bijection is then given by
\begin{align*}
(D'',z)&\mapsto (D''-\inf_{i}(z_i)_{w_i}, z)\\
(D+\inf_{i}(y_i)_{w_i}, y)&\mapsfrom (D,y).
\end{align*}
It follows that
\[
Z(X,t)\cdot Z(\mcP(\w)(\F_q(X)),t)= Z_\w(X,t).
\]
\end{proof}

Theoerm \ref{thm:RationalityProjZeta} together with Theorem \ref{thm:DivZetaRationality} shows that $Z(\mcP(\w)(\F_q(X)),t)$ is a rational function. From this we can derive an exact formula for $A_d(\w)$:

\begin{reptheorem}[\ref{thm:Main}]
%Let $X$ be a non-singular projective curve of genus $g$ whose global function, $\F_q(X)$, has class number $h$. Let $\zeta_X(s)=Z(X,q^{-s})$ be the Dedekind zeta function of $X$. Then there 
There exist algebraic integers $\alpha_i$ with $|\alpha_i|=\sqrt{q}$ and polynomials $b_i(d)$ in $d$ (whose coefficients are algebraic numbers) such that
\[
\#A_d(\w)=\sum_{\substack{\u\in 2^{\w}\\ |\u|\geq 2}} \sum_{\substack{\v\in 2^\w \\ \u\subseteq \v}} \frac{\gcd(\v,q-1)(-1)^{\#\v-\#\u}hq^{\#\u(1-g)}}{(q-1)\zeta_X(|\u|)} q^{d |\u|} +\sum_{i=1}^{2g} b_i(d) \alpha_i^d.
\]
\end{reptheorem}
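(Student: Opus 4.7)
The plan is to combine Theorem~\ref{thm:RationalityProjZeta} with the classical Weil decomposition
\[
Z(X,t) = \frac{L(t)}{(1-t)(1-qt)}, \qquad L(t) = \prod_{i=1}^{2g}(1-\alpha_i t), \quad |\alpha_i| = \sqrt{q},
\]
together with the explicit formula for $Z_\w(X,t)$ provided by Theorem~\ref{thm:DivZetaRationality}, producing
\[
Z(\P(\w)(\F_q(X)),t) = \frac{(1-t)(1-qt)}{L(t)} \sum_{\v\in 2^\w}\sum_{\u\in 2^\v} \frac{\gcd(\v,q-1)(-1)^{\#\v-\#\u}P_\u(t)}{(q-1)(1-q^{|\u|}t)}.
\]
Since $A_d(\w)$ is the coefficient of $t^d$ on the left-hand side, the strategy is to perform a partial fraction decomposition of this rational function in $t$ and read off the coefficients.

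The first step is pole analysis. The $\u = \emptyset$ summand has $|\u| = 0$, so its pole at $t = 1$ is cancelled by the $(1-t)$ in the numerator; any $\u$ with $|\u| = 1$ (which forces $\u = (1)$ and requires $w_i = 1$ for some $i$) has its pole at $t = q^{-1}$ cancelled by $(1-qt)$. The surviving poles of the combined rational function are therefore $t = q^{-|\u|}$ for each $\u \in 2^\w$ with $|\u| \geq 2$, together with the zeros $t = \alpha_i^{-1}$ of $L(t)$.

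Next I compute residues. The proof of Theorem~\ref{thm:DivZetaRationality} exhibits $P_\u(t)$ as the polynomial satisfying $P_\u(q^{-|\u|}) = hq^{\#\u(1-g)}$, while the Weil decomposition gives $\zeta_X(|\u|) = L(q^{-|\u|})/((1-q^{-|\u|})(1-q^{1-|\u|}))$. Combining these, the partial-fraction contribution at $t = q^{-|\u|}$ of a single summand is
\[
\frac{\gcd(\v,q-1)(-1)^{\#\v-\#\u}hq^{\#\u(1-g)}}{(q-1)\zeta_X(|\u|)} \cdot \frac{1}{1-q^{|\u|}t},
\]
which expands as a geometric series contributing the multiple of $q^{|\u|d}$ to the coefficient of $t^d$; summing over all pairs $(\u,\v)$ with $|\u| \geq 2$ recovers the main double sum in the theorem. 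The poles at $t = \alpha_i^{-1}$ contribute terms of the form $b_i(d)\alpha_i^d$, with $b_i(d)$ a polynomial in $d$ of degree one less than the multiplicity of $\alpha_i$ as a root of $L(t)$; the bound $|\alpha_i| = \sqrt{q}$ is the Riemann hypothesis for function fields.

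The main bookkeeping obstacle is verifying the $|\u| \in \{0,1\}$ cancellations cleanly, so that those summands feed only into the $L(t)$-pole part of the partial fraction and produce no phantom main-term contributions at growth rates $1$ or $q^d$. Once this is in place, the asymptotic follows at once: the top exponent in the main sum is $|\w|$, attained uniquely by $\u = \v = \w$; the next-largest is $|\w| - w_{\min}$, obtained by dropping a coordinate of smallest weight; and $\sum_i b_i(d)\alpha_i^d = O(d^{2g-1}q^{d/2})$ is absorbed into the $O(q^{d(|\w|-w_{\min})})$ error whenever $\#\w \geq 2$.
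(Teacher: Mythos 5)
Your proposal is correct and follows essentially the same route as the paper: combine Theorem \ref{thm:RationalityProjZeta} with the Weil form of $Z(X,t)$ and Theorem \ref{thm:DivZetaRationality}, cancel the poles at $t=1$ and $t=q^{-1}$ against $(1-t)(1-qt)$, and read off $a_\u$ from the residue at $t=q^{-|\u|}$ using $P_\u(q^{-|\u|})=hq^{\#\u(1-g)}$ and $\zeta_X(|\u|)=P(q^{-|\u|})/((1-q^{-|\u|})(1-q^{1-|\u|}))$. You are in fact slightly more careful than the paper about the multiplicity of the roots of $L(t)$, and your exponent $q^{|\u|d}$ is the correct one (the $q^{d\#\u}$ in the stated formula is evidently a typo, as the corollary's main term $q^{d|\w|}$ confirms).
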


\begin{proof}
It is well known that the classical zeta function $Z(X,t)$ is a rational function and can be written as
\[
Z(X,t)=\frac{P(t)}{(1-t)(1-qt)},
\]
where $P(t)=\prod_{i=1}^{2g}(1-\alpha_i t)$ whith $\alpha_i$ algebraic integers satisfying $|\alpha_i|=\sqrt{q}$. Combining this fact with Theorem \ref{thm:DivZetaRationality} we have that
\begin{align}\label{eq:ZetaFraction}
\frac{Z_\w(X,t)}{Z(X,t)}=\sum_{\v\in 2^{\w}}\sum_{\u\in 2^\v}\frac{\gcd(\v,q-1)(-1)^{\#\v-\#\u}(1-t)(1-qt)P_\u(t)}{(q-1)(1-q^{|\u|}t)P(t)}.
\end{align}
Note that this is a rational function which has only simple poles. These poles can only possibly occur at values of $t$ in the following set,
$$\{q^{-|\u|}: \u\in 2^{\w}\text{ with }|\u|\geq 2\}.$$
Note also that the degree of the numerator in  (\ref{eq:ZetaFraction}) is at most $2g$ since $\deg(P_\u(t))\leq 2g-2$. By Theorem \ref{thm:RationalityProjZeta} and a partial fractions decomposition, we have that there are rational numbers $a_\u$ for $\u\in 2^\w$, and polynomials $b_i(d)$ for $1\leq i\leq 2g$, such that for all $d\geq 1$,
\[
\# A_d(\w)=\sum_{\substack{\u\in 2^{\w}\\ |\u|\geq 2}} a_\u q^{d |\u|} +\sum_{i=1}^{2g} b_i(d) \alpha_i^d.
\]
Noting that
\begin{align*}
a_\u = \sum_{\substack{\v\in 2^\w \\ \u\subseteq \v}} \frac{\gcd(\v,q-1)(-1)^{\#\v-\#\u}hq^{\#\u(1-g)}}{(q-1)\zeta_X(|\u|)},
\end{align*}
%where the last equality follows from a straightforward combinatorial argument.
we obtain the desired formula.
\end{proof}

From Theorem \ref{thm:Main} we can easily derive the following asymptotic:

\begin{corollary}
For $\w=(w_0,\dots,w_n)$ set $w_{\min}\defeq \min_i\{w_i\}$. Then we have the following asymptotic in $d$: 
\[
A_d(\w)=\frac{h q^{\#\w(1-g)}\gcd(\w,q-1)}{\zeta_X(|\w|)(q-1)} q^{d|\w|}+O\left(q^{d(|\w|-w_{\min})}\right).
\]
\end{corollary}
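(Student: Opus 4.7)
My plan is to read the asymptotic off of the exact formula supplied by Theorem \ref{thm:Main} by isolating the single dominant summand and bounding everything else. In the double sum
\[
\sum_{\substack{\u\in 2^{\w}\\ |\u|\geq 2}} \sum_{\substack{\v\in 2^\w \\ \u\subseteq \v}} \frac{\gcd(\v,q-1)(-1)^{\#\v-\#\u}hq^{\#\u(1-g)}}{(q-1)\zeta_X(|\u|)} q^{d|\u|},
\]
the power $q^{d|\u|}$ is maximized precisely when $\u=\w$, and this in turn forces $\v=\w$ in the inner sum. The sign is then $(-1)^{0}=+1$ and $\gcd(\v,q-1)=\gcd(\w,q-1)$, so the contribution of $(\u,\v)=(\w,\w)$ reproduces the claimed leading term
\[
\frac{h q^{\#\w(1-g)}\gcd(\w,q-1)}{\zeta_X(|\w|)(q-1)}\,q^{d|\w|}.
\]

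For every other sub-tuple $\u\subsetneq\w$, removing at least one coordinate of weight $\geq w_{\min}$ forces $|\u|\leq |\w|-w_{\min}$. Since there are only finitely many admissible pairs $(\u,\v)$ and the coefficient $\gcd(\v,q-1)hq^{\#\u(1-g)}/((q-1)\zeta_X(|\u|))$ is bounded in each case, the total contribution of the remaining summands is $O(q^{d(|\w|-w_{\min})})$. The oscillating tail $\sum_{i=1}^{2g}b_i(d)\alpha_i^d$ is $O(q^{d/2+\varepsilon})$, since $|\alpha_i|=\sqrt{q}$ and each $b_i(d)$ is polynomial in $d$; provided $\#\w\geq 2$, the difference $|\w|-w_{\min}$ is an integer $\geq 1$, so this contribution is absorbed into the $O(q^{d(|\w|-w_{\min})})$ error. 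The degenerate case $\#\w=1$ is trivial because $\P(w_0)$ consists of a single rational point.

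No step here should pose a genuine obstacle: the argument is a bookkeeping reduction of the exact formula to a closed-form asymptotic. The only mild point to verify is that the archimedean sum from the zeta function of $X$ is indeed beaten by the error coming from the subdominant sub-tuples, and this follows immediately from integrality of the weights together with the Weil bound $|\alpha_i|=\sqrt{q}$.
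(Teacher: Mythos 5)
Your proof is correct and matches the paper's (implicit) derivation: the paper states this corollary without proof, as an immediate consequence of isolating the $\u=\v=\w$ term of the exact formula in Theorem \ref{thm:Main} and bounding all remaining terms by $q^{d(|\w|-w_{\min})}$, exactly as you do. Note only that the printed exact formula reads $q^{d\#\u}$ where your $q^{d|\u|}$ is what the partial-fraction decomposition at the poles $t=q^{-|\u|}$ actually yields (a typo you correctly repaired), and that your caveat that $\#\w\geq 2$ is needed for the $O\left(q^{d(1/2+\varepsilon)}\right)$ tail to be absorbed is a point the paper glosses over.
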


\section{Acknowledgements}

The author would like to thank Bryden Cais and C. Douglas Haessig for their encouragement during this project, and the anonymous referee for making several corrections and suggestions that improved the exposition of this paper.

\bibliographystyle{alpha}
\bibliography{bibfile}

\end{document}